\documentclass[12pt]{amsart}
\usepackage{amssymb}
\usepackage{amsfonts}
\usepackage{latexsym}
\usepackage{amscd}
\usepackage[mathscr]{euscript}
\usepackage{xy} \xyoption{all}




\newcount\parracount
\newcount\recount
\parracount=0
\recount=0

\newtheorem{definition}{Definition}
\newtheorem{theorem}[definition]{Theorem}

\newtheorem{corollary}[definition]{Corollary}

\newtheorem{example}[definition]{Example}

\newtheorem{proposition}[definition]{Proposition}

\typeout{Substyle for letter-sized documents. Released 24 July 1992}



\textwidth6.5in
\textheight8.5in
\hoffset=-.75in
\voffset=-.5in

\usepackage{setspace}

\def\Z{\mathbb Z}

\begin{document}

\title[Cohn path algebras have IBN]{Cohn path algebras have Invariant Basis Number}

\author{Gene Abrams}
\address{Department of Mathematics, University of Colorado,
Colorado Springs CO 80918 U.S.A.} \email{abrams@math.uccs.edu}
 \author{M\"{u}ge Kanuni}
 \address{Department of Mathematics, D\"{u}zce University, 
Konuralp D\"{u}zce 81620 Turkey} \email{mugekanuni@duzce.edu.tr}

\thanks{The first  author is partially supported by a Simons Foundation Collaboration Grants for
Mathematicians Award \#208941. The second author is supported by a U.S. Department of State  
2012-2013 Fulbright Visiting Scholar Program Grant and by the Scientific and Technological Research Council of Turkey (T\"{U}B\.{I}TAK-B\.{I}DEB) 2219 International Post-Doctoral Research Fellowship during her sabbatical visit to the University of Colorado Colorado Springs. This author would like to thank her colleagues at the host institution for their hospitality.} \subjclass[2010]{Primary 16S99 Secondary 05C25} \keywords{Cohn path algebra, Leavitt path algebra, Invariant Basis Number}

\begin{abstract}
For any finite directed graph $E$ and any field $K$ we show that the Cohn path algebra $C_K(E)$ has the Invariant Basis Number property, moreover Invariant Matrix Number property.
\end{abstract}

\maketitle

\date{}

\maketitle



\onehalfspacing

For any directed graph $E$ and field $K$ the {\it Leavitt path algebra $L_K(E)$ of $E$ with coefficients in $K$} has been the object of intense research focus since its introduction in 2005, see e.g.  \cite{AA1} and \cite{AMP}.  Leavitt path algebras are generalizations of the {\it Leavitt algebras} $L_K(1,n)$ ($n \geq 2$) introduced by Leavitt in \cite{L1}.   A ring $R$ is said to have the {\it Invariant Basis Number} property (or more simply {\it IBN}) in case 
for any pair of positive integers $m\neq m'$ we have 
that the free left $R$-modules $R^m$ and $R^{m'}$ are not isomorphic. 
The Leavitt algebras  fail to have the IBN property: for instance, if $R = L_K(1,n)$, then ${}_RR^1 \cong {}_RR^n$.   (Indeed, the search for non-IBN rings motivated much of Leavitt's work.)   Additional examples abound of more general Leavitt path algebras which also fail to have the IBN property.

Let $M_n(R)$ denote $n \times n$ matrices over a ring $R$. We say that $R$ has the {\it Invariant Matrix Number} property, in case $M_i(R)\ncong M_j(R)$ for every pair of positive integers $i \neq j$.

Recently, the {\it Cohn path algebra} $C_K(E)$ of a directed graph $E$ has been defined and investigated, see e.g.,  \cite{AAS}, \cite{AM}, and/or  \cite{AG}.  As it turns out (see Theorem \ref{everyCohnisLeavitt}), for any directed graph $E$ there exists a directed graph $F$ for which $C_K(E) \cong L_K(F)$.    Rephrased: every Cohn path algebra is in fact also a Leavitt path algebra.
The purpose of this short note is to establish that, the abundance of non-IBN Leavitt path algebras notwithstanding, every Cohn path algebra has the Invariant Basis Number property, indeed, has the stronger Invariant Matrix Number property. 

\medskip

Throughout this note $E = (E^0, E^1, s, r)$ will denote a directed graph with vertex set $E^0$, edge set $E^1$, source function $s$, and range function $r$.    In particular, the source vertex of an edge $e$ is denoted by $s(e)$, and the range vertex by $r(e)$. We will assume that $E$ is {\it finite}, i.e. that both $E^0$ and $E^1$ are finite sets.  A {\it sink} is a vertex $v$ for which the set $s^{-1}(v) = \{e\in E^1 \mid s(e) = v\}$ is empty; any non-sink is called a  {\it regular} vertex.

For any directed graph $E$, we denote by $A_E$ the incidence matrix of $E$. Formally, if $E^0 = \{v_i \mid 1 \leq i \leq n\}$, then  $A_E = (a_{i,j})$ is the $n \times n$ matrix for which $a_{i,j}$ is the number of edges $e$ having $s(e) = v_i$ and $r(e)=v_j$.   In particular, if $v_i \in E^0$ is a sink, then $a_{i,j} = 0$ for all $1\leq j \leq n$, i.e., the $i^{th}$ row of $A_E$ consists of all zeros.   We assume throughout  that the vertices $\{v_i \mid 1 \leq i \leq n\}$ have been labelled in such a way that the vertices  $\{v_i \mid 1 \leq i \leq t\}$  are the regular vertices, and $\{v_i \mid t+1 \leq i \leq n\}$  are the sinks (if any).

\begin{definition}\label{definition}  
{\rm Let $K$ be a field, and let $E$ be a  graph. The {\em Cohn path $K$-algebra} $C_K(E)$ {\em of $E$ with coefficients in $K$} is  the $K$-algebra generated by a set $\{v\mid v\in E^0\}$, together with a set of variables $\{e,e^*\mid e\in E^1\}$, which satisfy the following relations:

(V)  $vw = \delta_{v,w}v$ for all $v,w\in E^0$, \  

  (E1) $s(e)e=er(e)=e$ for all $e\in E^1$,

(E2)  $r(e)e^*=e^*s(e)=e^*$ for all $e\in E^1$, and 

 (CK1)  $e^*e'=\delta _{e,e'}r(e)$ for all $e,e'\in E^1$.

\noindent
The {\em Leavitt path $K$-algebra} $L_K(E)$ {\em of $E$ with coefficients in $K$} is the $K$-algebra generated by the same set $\{v\mid v\in E^0\}$, together with the same set of variables $\{e,e^*\mid e\in E^1\}$, which satisfy the same set of relations (V), (E1), (E2), and (CK1), and also satisfy the additional relation

(CK2)  $v=\sum _{\{ e\in E^1\mid s(e)=v \}}ee^*$ for every regular vertex $v\in E^0$.  
\hfill $\Box$
}
\end{definition}
Specifically, if we let $N \subseteq C_K(E)$ denote the ideal of $C_K(E)$ generated by the elements of the form $v~-~\sum_{\{e\in E^1 \mid s(e)=v\}} ee^*,$ where $v \in E^0$ is a regular vertex, then  we may view the Leavitt path algebra $L_K(E)$   as the quotient  algebra 
$L_K(E) \cong C_K(E)/N.$      For a finite graph $E$ we have that both $C_K(E)$ and $L_K(E)$ are unital, each having  identity $1 = \sum_{v\in E^0}v$.    A perhaps-surprising additional connection between Cohn path algebras and Leavitt path algebras is given here; the discussion represents a specific case of a more general result described in \cite[Section 1.5]{AAS}.

 \begin{definition} \label{FgraphDef}
 {\rm Let $K$ be a field, let $E$ be an arbitrary graph
and let $Y$ denote the set  of regular vertices of $E$. Let $Y'=\{ v'\mid v\in Y \}$ be a disjoint copy of
$Y$. For $v\in Y$ and for each
edge $e$ in $E$ such that $r_E(e)=v$, we consider a new symbol $e'$.
We define the graph $F = F(E)$, as follows:
 $F^0=E^0\sqcup Y' ;  F^1=E^1\sqcup \{ e' \mid r_E(e)\in Y \}; $ and for each $e\in E^1$, $s_{F}(e)=s_E(e)$, 
 $s_{F}(e')=s_E(e)$,  $r_{F}(e)=r_E(e)$, and  $r_{F}(e')=r_E(e)'$.
 \hfill $\Box$}
\end{definition}
Less formally, the graph $F = F(E)$ is built from $E$  by adding a new vertex to $E$ corresponding to each non-sink of $E$, and then including new edges to each of these new vertices in the same configuration as their counterpart vertices in $E$.   Observe in particular that each of the new vertices $v'\in Y'$ is a  sink in $F$.    (We note that the graph $F(E)$ as defined here is the graph $E(\emptyset)$ of \cite[Definition 1.5.15]{AAS} when $E$ is finite.)  
The incidence matrix $A_F$ of $F=F(E)$ is the $(n+t) \times (n+t)$ matrix in which,  for $1 \leq i \leq t$, the $i^{th}$ row is $(a_{i,1}, a_{i,2}, ..., a_{i,n}, a_{i,1}, a_{i,2}, ... , a_{i,t})$ (where 
$A_E=(a_{ij})$ is the incidence matrix of $E$), and the remaining $n$ rows are zeroes.  

\begin{example}
\label{exam:Cohn=Lea}
{\rm  Let $E$ be the  graph
$ \xymatrix{
{\bullet}^u  \ar[r]^e & {\bullet}^v  \ar[r]^f & {\bullet}^w }
$.  
Then the graph
$F = F(E)$ is:

$ \xymatrix{
{\bullet}^u \ar[r]^e \ar[rd]^{e'} & {\bullet}^v  \ar[r]^f  & {\bullet}^w  \\ {\bullet}^{u'} &  {\bullet}^{v'}  &   }
$

\noindent
We note that there is no new vertex in $F$ corresponding to $w$, as $w$ is a sink in $E$.
\hfill $\Box$}
\end{example}
\begin{example}\label{F(R_2)Example}
{\rm Let 
$R_2$ be the graph $  \xymatrix{   \bullet^v 
\ar@(u,r)^{e} 
\ar@(d,r)_{f}
  }
$. 
Then the graph  $F = F(R_2)$ is:
 
$ \xymatrix{\bullet^{v'} &  \bullet^v 
\ar@(u,r)^{e} 
\ar@(d,r)_{f}
\ar@/^-.5pc/[l]_{  {}_{{}_{e'}}}
\ar@/^.5pc/[l]^{   {}^{{f'}}}
  }$
\hfill $\Box$}
\end{example}
Here is the first key result we will utilize.

\begin{theorem} (A specific case of  \cite[Theorem 1.5.17]{AAS}) 
\label{everyCohnisLeavitt} Let $E$ be any graph. Then there is an isomorphism of $K$-algebras 
$C_K(E)\cong
L_K(F(E)).$
\end{theorem}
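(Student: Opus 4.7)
The plan is to exhibit mutually inverse $K$-algebra homomorphisms $\varphi: C_K(E) \to L_K(F)$ and $\psi: L_K(F) \to C_K(E)$ defined explicitly on generators. An initial observation motivates the construction: the naive identity-on-generators map is not an isomorphism. For instance, if $v \in Y$ has all of its out-edges landing in sinks of $E$, then (CK2) at $v$ in $L_K(F)$ forces $v = \sum_{s(f)=v} ff^*$, whereas $q_v := v - \sum_{s(f)=v}ff^*$ is a nonzero idempotent in $C_K(E)$. The correct map must therefore split the $v$-corner of $L_K(F)$ between the two orthogonal idempotents $v$ and $v'$.

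I would define $\varphi$ by $\varphi(v) = v + v'$ for $v \in Y$, $\varphi(v) = v$ for each sink $v$, $\varphi(e) = e + e'$ and $\varphi(e^*) = e^* + (e')^*$ when $r_E(e) \in Y$, and $\varphi(e) = e$, $\varphi(e^*) = e^*$ otherwise. Verifying that the Cohn relations (V), (E1), (E2), (CK1) are satisfied by these images in $L_K(F)$ is routine: cross terms vanish because $v$ and $v'$ are orthogonal in $F^0$, and the pairs $(e,e')$ already obey (CK1) among themselves in $L_K(F)$.

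For the inverse I would set $\psi(v') = q_v$ and $\psi(v) = v - q_v$ for $v \in Y$ (with $\psi(v) = v$ on sinks); and for each $e \in E^1$ with $r_E(e) = v \in Y$, $\psi(e') = e q_v$, $\psi((e')^*) = q_v e^*$, $\psi(e) = e - e q_v$, and $\psi(e^*) = e^* - q_v e^*$ (identity on $e, e^*$ when $r_E(e) \notin Y$). The relations (V), (E1), (E2), (CK1) reduce to short computations using $q_v^2 = q_v$ and $e^*f = \delta_{e,f}\,r_E(e)$. The main obstacle is the Leavitt relation (CK2): at a regular vertex $u$, direct substitution gives $\psi(ff^*) = ff^* - f q_{r(f)} f^*$ when $r(f) \in Y$ and $\psi(f'(f')^*) = f q_{r(f)} f^*$, so the $f q_{r(f)} f^*$ contributions cancel in the (CK2) sum, reducing the identity to $u - q_u = \sum_{s(f)=u} ff^*$, which is precisely the definition of $q_u$ in $C_K(E)$.

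Finally, I would check that $\varphi$ and $\psi$ are mutually inverse on generators. The pivotal computation is $\varphi(q_v) = v'$, obtained by expanding $\varphi(q_v) = (v + v') - \sum_{s(f)=v}\varphi(ff^*)$ and invoking (CK2) at $v$ in $L_K(F)$ to collapse the $E^0$-part. From this all remaining checks are immediate, for instance $\psi(\varphi(v)) = (v - q_v) + q_v = v$ and $\varphi(\psi(e')) = \varphi(e)\varphi(q_v) = (e+e')v' = e'$.
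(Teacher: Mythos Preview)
Your argument is correct. The paper itself does not supply a proof of this theorem; it merely records the statement as a specific case of \cite[Theorem 1.5.17]{AAS} and uses it as a black box. So there is no ``paper's own proof'' to compare against, only the cited reference.

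That said, your construction is exactly the standard one, and all the verifications go through as you indicate. A couple of confirmations on the points you flagged as pivotal:
\begin{itemize}
\item The computation $\varphi(q_v)=v'$ is correct: for $f$ with $r_E(f)\in Y$ one has $f(f')^*=0=f'f^*$ in $L_K(F)$ because $r_F(f)=r_E(f)\neq r_E(f)'=r_F(f')$, so $\varphi(f)\varphi(f^*)=ff^*+f'(f')^*$; summing over $s_E(f)=v$ and invoking (CK2) at $v$ in $L_K(F)$ collapses everything to $v$, leaving $\varphi(q_v)=(v+v')-v=v'$.
\item The (CK2) check for $\psi$ at a regular $u\in F^0$ is exactly as you wrote: the $fq_{r(f)}f^*$ terms cancel between $\psi(ff^*)$ and $\psi(f'(f')^*)$, reducing the identity to $u-q_u=\sum_{s_E(f)=u}ff^*$.
\end{itemize}
Everything else is indeed routine. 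Your proof is a complete, self-contained replacement for the citation; it is almost certainly the same argument as in \cite{AAS}, specialized to the case $X=\emptyset$.
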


For any directed graph $E$ with $|E^0| = n$ we construct the abelian monoid $M_E$ as follows.  Consider the abelian monoid  $T = (\Z^+)^{n}$ of $n$-tuples of non-negative integers.  For each regular vertex $v_i$ ($1 \leq i \leq t$)   we let $\vec{b_i}$ denote the  vector $ (0, 0, ... , 1, 0, ... 0)$ (having $1$ in the $i$-th component) of $(\Z^+)^{n}$.  We consider the equivalence relation  $\sim_E$ in   $(\Z^+)^{n}$, generated by setting 
$$\vec{b_i} \sim_E (a_{i,1}, a_{i,2}, ..., a_{i,n})$$ for each regular vertex $v_i$.  The monoid $M_E$ consists of the equivalence classes $(\Z^+)^{n} / \sim_E$; if we denote the equivalence class of an element $\vec{a} \in (\Z^+)^{n}$ by $[\vec{a}]$, then the operation in $M_E$ is given by setting $[\vec{a}] + [\vec{a'}] = [\vec{a} + \vec{a'}]$ for $\vec{a}, \vec{a'} \in (\Z^+)^{n}$.     

For instance, if $E$ is the graph of Example \ref{exam:Cohn=Lea}, then $M_E$ is the monoid $(\Z^+)^{3}$, modulo the relation $\sim_E$  generated by setting $(1,0,0)\sim_E (0,1,0)$ and $(0,1,0) \sim_E (0,0,1)$.   It is not hard to see that $M_E \cong \Z^+$.  For the graph $F=F(E)$ of that same Example, one can show that $M_F \cong (\Z^+)^3$.    On the other hand, if $R_2$ is the graph presented in  Example \ref{F(R_2)Example}, then $M_{R_2}$ is the monoid $(\Z^+)^1$, modulo the relation generated by setting $(1) \sim_{R_2} (2)$.   In this case we see that $M_{R_2} \cong \{0,x\}$, where $x + x = x$.  (N.b.: $M_{R_2}$ is {\it not} the group $\Z_2$.)   Furthermore, for $F = F(R_2)$, one can show that $M_F$ is the monoid
\[ \{ [(0,i)]\mid i \geq 0 \} \ \sqcup \  \{ [(1,i)]\mid i \geq 0 \} \ \sqcup \ \{ [(i,0)]\mid  i \geq 2\} \ \sqcup \ 
\{ [(i,1)]\mid  i \geq 2 \} .\] 
Observe in particular that for positive integers $m \neq m'$, $(m,0) \nsim_F (m',0)$ and $(m,1) \nsim_F (m',1)$.

For any ring $R$, we denote by $\mathcal{V}(R)$ the abelian monoid of isomorphism classes of finitely generated projective left $R$-modules, with operation $\oplus$.   If $P$ is a finitely generated projective left $R$-module, we denote the element of $\mathcal{V}(R)$ which contains $P$ by $[P]$.   We recast the IBN property in $\mathcal{V}(R)$ as follows:  $R$ has IBN if and only if for every pair of distinct positive integers $m\neq m'$ we have $m[R] \neq m'[R]$ as elements of $\mathcal{V}(R)$.   
Here is the second key result we will utilize.

\begin{theorem} (\cite[Theorem 3.5]{AMP}) \label{isoofmonoids}
Let $E$ be a finite graph with vertices $\{v_i \mid 1\leq i \leq n\}$, and let $K$  be any field.  Then  the assignment $[\vec{b_i}] \mapsto [L_K(E)v_i]$  yields an isomorphism of monoids $M_E \cong \mathcal{V}(L_K(E))$.   In particular, under this isomorphism, if $\vec{\rho} = (1,1,...,1) \in (\Z^+)^{n}$, we have $[\vec{\rho}]  \mapsto [L_K(E)]$.
\end{theorem}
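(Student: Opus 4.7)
\medskip

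The plan is to construct an explicit monoid homomorphism $\varphi : M_E \to \mathcal{V}(L_K(E))$ by $[\vec{b_i}] \mapsto [L_K(E)v_i]$ and to show it is bijective. Concretely, first define $\widetilde{\varphi} : (\Z^+)^n \to \mathcal{V}(L_K(E))$ by $\widetilde{\varphi}(a_1,\dots,a_n) = \sum_i a_i[L_K(E)v_i]$; then verify that $\widetilde{\varphi}$ factors through $\sim_E$, giving $\varphi$, and finally establish that $\varphi$ is both surjective and injective. Once $\varphi$ is known to be an isomorphism, the image of $[\vec{\rho}]$ is $\sum_i [L_K(E)v_i] = [\bigoplus_i L_K(E)v_i] = [L_K(E)]$, the last equality coming from the orthogonal idempotent decomposition $1 = \sum_i v_i$.

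For well-definedness it suffices to verify, for each regular vertex $v_i$, the identity $[L_K(E)v_i] = \sum_j a_{i,j}[L_K(E)v_j]$ in $\mathcal{V}(L_K(E))$. Relation (CK2) exhibits $v_i = \sum_{s(e)=v_i} ee^*$, and (CK1) forces pairwise orthogonality of the idempotents $\{ee^*\}_{s(e)=v_i}$, since $(ee^*)(e'(e')^*) = e(e^*e')(e')^* = \delta_{e,e'}ee^*$. Hence $L_K(E)v_i = \bigoplus_{s(e)=v_i} L_K(E)ee^*$ as left $L_K(E)$-modules. For each edge $e$ with $r(e) = v_j$, the pair $e, e^*$ implements a Murray--von Neumann equivalence $ee^* \sim e^*e = v_j$, with mutually inverse module isomorphisms $L_K(E)ee^* \to L_K(E)v_j$, $a \mapsto ae$, and $L_K(E)v_j \to L_K(E)ee^*$, $b \mapsto be^*$. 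Gathering edges by range vertex then produces exactly $a_{i,j}$ copies of $[L_K(E)v_j]$, as required.

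Surjectivity is the easier half: any finitely generated projective $P$ is isomorphic to $L_K(E)^k \cdot p$ for some idempotent matrix $p$, and the ambient free module decomposes via $1 = \sum_i v_i$ into a sum of the $L_K(E)v_i$; a refinement argument on idempotents then writes $[P]$ as a sum of $[L_K(E)v_j]$'s. The main obstacle is injectivity, where one must rule out any relations in $\mathcal{V}(L_K(E))$ beyond those forced by (CK2). The canonical strategy, and the one carried out in \cite[Theorem 3.5]{AMP}, realizes $L_K(E)$ as a directed colimit of rings whose monoids of finitely generated projectives can be computed directly using Bergman's theorem on universal algebra constructions (G.~Bergman, \emph{Coproducts and some universal ring constructions}, Trans.~AMS 1974); the colimit of those monoids is then identified with the presentation $(\Z^+)^n / \sim_E$, so no spurious identifications appear. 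This passage to the colimit, together with the verification that Bergman's framework applies, is the technical crux.
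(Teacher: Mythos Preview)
The paper does not give its own proof of this theorem; it is quoted from \cite[Theorem~3.5]{AMP} and used as a black box. So there is nothing in the present paper to compare your proposal against. What you have written is a reasonable outline of the argument in \cite{AMP} itself, and you correctly identify the decisive ingredient: Bergman's coproduct theorem applied along a directed colimit presentation of $L_K(E)$.

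One substantive comment on your sketch: the surjectivity half is not as easy as you suggest. Saying that an arbitrary idempotent matrix over $L_K(E)$ can, via ``a refinement argument on idempotents,'' be written as a sum of classes $[L_K(E)v_j]$ is already most of the theorem; the decomposition $1 = \sum_i v_i$ alone does not force every finitely generated projective to be a direct sum of the $L_K(E)v_i$ (in a general ring with orthogonal idempotents summing to $1$ this certainly fails). In \cite{AMP} surjectivity and injectivity are obtained together: Bergman's theorem computes $\mathcal{V}$ exactly at each finite stage of the colimit, and passing to the limit yields the full isomorphism in one stroke. Your well-definedness paragraph, on the other hand, is correct and self-contained.
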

Consequently,
\begin{corollary} \label{showingL(E)IBN}
Let $F$ be any finite graph, and $K$ any field.  Let $\vec{\rho} = (1,1,...,1) \in M_F$.  Then $L_K(F)$ has IBN if and only if for any pair of positive integers $m\neq m'$, we have $m\vec{\rho} \nsim_F m'\vec{\rho}$.  
\end{corollary}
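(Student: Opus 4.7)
The plan is to prove this by unwinding the restatement of IBN given just before the corollary and combining it with the monoid isomorphism of Theorem \ref{isoofmonoids}. The content is formal; there is no genuinely hard step, only a sequence of transports along the isomorphism.

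First I would recall that, as noted immediately preceding Theorem \ref{isoofmonoids}, the ring $L_K(F)$ has IBN if and only if $m[L_K(F)] \neq m'[L_K(F)]$ as elements of $\mathcal{V}(L_K(F))$ for every pair of distinct positive integers $m \neq m'$. Thus the task reduces to translating this inequality from $\mathcal{V}(L_K(F))$ into the monoid $M_F$.

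Next I would invoke Theorem \ref{isoofmonoids}, which supplies an isomorphism of abelian monoids $\varphi : M_F \to \mathcal{V}(L_K(F))$ with $\varphi([\vec{b_i}]) = [L_K(F)v_i]$ and, crucially for us, $\varphi([\vec{\rho}]) = [L_K(F)]$. Since $\varphi$ is a monoid homomorphism, for any positive integer $m$ we have $m [L_K(F)] = m \varphi([\vec{\rho}]) = \varphi(m[\vec{\rho}]) = \varphi([m\vec{\rho}])$. Because $\varphi$ is injective, the inequality $m[L_K(F)] \neq m'[L_K(F)]$ in $\mathcal{V}(L_K(F))$ is equivalent to $[m\vec{\rho}] \neq [m'\vec{\rho}]$ in $M_F$, which by definition of $M_F$ is exactly the statement $m\vec{\rho} \nsim_F m'\vec{\rho}$.

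Stringing these equivalences together yields the claim. The only point that requires any care is to make sure one interprets the $m$-fold sum on both sides consistently (as $m$ copies added in the respective monoid) so that the homomorphism property of $\varphi$ applies; the surjectivity of $\varphi$ plays no role, only injectivity on the elements $[m\vec{\rho}]$ matters. I do not anticipate a genuine obstacle, since the corollary is essentially a direct reformulation of Theorem \ref{isoofmonoids} in the language of IBN.
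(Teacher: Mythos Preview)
Your proposal is correct and matches the paper's approach exactly: the paper treats this corollary as an immediate consequence of Theorem~\ref{isoofmonoids} (it is introduced with the single word ``Consequently,'' and no further proof is given), and your argument simply spells out the transport along the monoid isomorphism together with the $\mathcal{V}(R)$-reformulation of IBN stated just before that theorem.
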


In the graph $E$ of Example \ref{exam:Cohn=Lea}, we have that $M_E \cong \Z^+$; moreover, in this identification, $[\vec{\rho}] \mapsto 3$.   Since $m\neq m'$ obviously gives $m\cdot 3 \neq m' \cdot 3$ in $\Z^+$, we conclude that $L_K(E)$ has IBN.   Further, in the graph $F(E)$ of that same Example, we have that $M_{F(E)} \cong (\Z^+)^3$, and in this identification we have $[\vec{\rho}] \mapsto (3,1,2)$.  As before,  $m\neq m'$ obviously gives $m(3,1,2) \neq m'(3,1,2)$ in $(\Z^+)^3$, we conclude that $L_K(F(E))$ has IBN as well.  On the other hand, if $R_2$ is the graph of Example \ref{F(R_2)Example}, then $M_{R_2} \cong \{0,x\}$, and in this identification we have $[\vec{\rho}] \mapsto x$.   Since $1x = 2x$, we conclude that $L_K(R_2)$ does not have IBN.  (We note that each of these three observations is well-known, we have included them   here only to help  clarify Corollary \ref{showingL(E)IBN}.  Indeed, $L_K(R_2) \cong L_K(1,2)$, the non-IBN Leavitt algebra for $n=2$ mentioned in the introductory paragraphs of the article.)   Finally, for the graph $F=F(R_2)$, we get 
\[m \vec{\rho}= (m,m) \sim_F \begin{cases} (\frac{m}{2},0) & \mbox{if $m$ is even} \\ (\frac{m+1}{2},1) 
& \mbox{if $m$ is odd} \end{cases}\]  
for any positive integer $m$.   So by a previous observation we get that $m\neq m'$ gives $m \vec{\rho} \nsim_F m'\vec{\rho}$. Hence $L_K(F(R_2))$ has IBN property. 
That both $L_K(F(E))$ and  $L_K(F(R_2))$ have the IBN property will also follow as a consequence of Theorem \ref{MainResult}.  

 We will use the following elementary linear algebra result.

\begin{proposition}\label{linearalgprop}
Given any non-negative integers $a_{ij}$ ($1 \leq i \leq t$, $1\leq j \leq n$), there exist
$w_1,w_2, \dots, w_{n+t}$ in $\mathbb{Q}$ which satisfy the following system of $t+1$ linear equations: 
$$\begin{array}{ccccccccccccccccc}
1 & = & w_1 &+& w_2& +& ... & +&w_n& +& w_{n+1}& +& ...&  +& w_{n+t}\\
w_1 & = & 
a_{11}w_1 &+& a_{12}w_2& +& ... & + &a_{1n}w_n& +& a_{11}w_{n+1} &+ & ...
& +& a_{1t}w_{n+t} \\
w_2 & = &  a_{21}w_1 & +& a_{22}w_2& + &... & +& a_{2n}w_n& +& a_{21}w_{n+1}& +& ... 
& +& a_{2t}w_{n+t}\\
\vdots & & &&&&&&&&&&&&\\
w_t & = & 
a_{t1}w_1 & + & a_{t2}w_2 & +& ... &+ &a_{tn}w_n& +& a_{t1}w_{n+1}& +& ...& +& 
a_{tt}w_{n+t}
\end{array}$$
\end{proposition}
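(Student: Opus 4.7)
The plan is to exploit the striking symmetry in the $t$ row equations: in each of them, the variables $w_j$ and $w_{n+j}$ (for $1 \leq j \leq t$) appear with the same coefficient $a_{ij}$. This suggests the change of variables
\[ u_j := w_j + w_{n+j} \quad (1 \leq j \leq t), \qquad u_j := w_j \quad (t+1 \leq j \leq n).\]
Under this substitution, the $i$-th row equation ($1 \leq i \leq t$) collapses to the clean identity
\[ w_i = \sum_{j=1}^{n} a_{ij}\, u_j, \]
and the normalization equation $\sum_{k=1}^{n+t} w_k = 1$ becomes simply $\sum_{j=1}^{n} u_j = 1$. Crucially, the variables $w_{n+1}, \dots, w_{n+t}$ disappear entirely from the transformed system; they are unconstrained except via the invertible bookkeeping relation $w_{n+j} = u_j - w_j$.

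Once the transformed system is in this form, I would construct a solution by working backwards. First, choose any rational $u_1, \dots, u_n \in \mathbb{Q}$ with $\sum_{j=1}^n u_j = 1$ — for definiteness, $u_j = 1/n$ for each $j$ will do. Next, define $w_i := \sum_{j=1}^n a_{ij} u_j$ for $1 \leq i \leq t$, which is manifestly rational. Set $w_j := u_j$ for $t+1 \leq j \leq n$, and finally set $w_{n+j} := u_j - w_j$ for $1 \leq j \leq t$. By construction every $w_k \in \mathbb{Q}$, and a direct substitution verifies that the original $t+1$ equations are all satisfied.

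The proof is almost entirely routine once the substitution is identified, so there is no real obstacle; the only conceptual step is recognizing that the matched pairing of coefficients $a_{ij}$ on $w_j$ and on $w_{n+j}$ in each row equation is exactly what permits the collapse $u_j := w_j + w_{n+j}$. Equivalently, one can phrase the argument as follows: the linear map from $\mathbb{Q}^{n+t}$ to $\mathbb{Q}^{n}$ sending $(w_1, \dots, w_{n+t}) \mapsto (u_1, \dots, u_n)$ is surjective, so the preimage of any $\vec{u}$ with $\sum u_j = 1$ meets the affine subspace cut out by the $t$ row equations nontrivially. This also explains geometrically why the system, which has $t+1$ equations in $n+t$ unknowns, is always consistent regardless of the structure of the underlying graph.
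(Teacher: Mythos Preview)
Your argument is correct. Both your proof and the paper's rest on the same underlying observation --- that the coefficient $a_{ij}$ appears identically on $w_j$ and on $w_{n+j}$ in each of the $t$ row equations --- but exploit it in different ways. The paper writes the system as $B\vec{x} = (1,0,\dots,0)^t$ for the $(t+1)\times(n+t)$ coefficient matrix $B$, then performs the column operation ``subtract column $i$ from column $n+i$'' for $1 \leq i \leq t$; this produces a column-equivalent matrix whose last $t+1$ columns are visibly independent, so $B$ has full row rank $t+1$ and the system is consistent for any right-hand side. Your proof instead encodes the same column pairing on the variable side via the substitution $u_j = w_j + w_{n+j}$, collapsing the system to one that can be solved by inspection, and then writes down an explicit solution. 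Your route is more constructive and a bit shorter; the paper's rank argument is more in the textbook linear-algebra style and yields the marginally stronger fact that $B\vec{x}=\vec{c}$ is solvable for \emph{every} $\vec{c}\in\mathbb{Q}^{t+1}$, though only the case $\vec{c}=(1,0,\dots,0)^t$ is needed downstream.
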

\begin{proof}
Consider the following  $(t+1) \times (n+t)$ matrix $B$:
$$B = \left( \begin{array}{ccccccccc}1 & 1 & ...& 1 & ... & 1 & 1  & ... & 1\\ 
a_{11}-1 & a_{12} & ... & a_{1t} & ... & a_{1n} & a_{11} &   ... & a_{1t} \\
a_{21} & a_{22}-1 & ... & a_{2t} &... & a_{2n} & a_{21} &   ... & a_{2t} \\
\vdots &  &  & & & &&  &\\
a_{t1} &  a_{t2} & ... & a_{tt}-1 &... & a_{tn} & a_{t1} &    ...  &a_{tt}\\
\end{array} \right)$$

Then the existence of rationals $w_1, w_2, ... , w_{n+t}$ which satisfy the indicated equations is equivalent to the existence of a solution in $\mathbb{Q}^{n+t}$  to the system of $t+1$ equations represented by the vector equation $B\vec{x} = (1,0,0,...,0)^t$.  

We claim that the  $t+1$  rows of $B$ are linearly independent. 
For $1 \leq i \leq t$ we subtract column $i$ from column $n+i$ in the matrix $B$, 
which yields the column-equivalent matrix $C$: 
$$C = \left( \begin{array}{cccccccccc}1 & 1 & ...& 1 & ... & 1 & 0  & 0 &... & 0\\ 
a_{11}-1 & a_{12} & ... & a_{1t} & ... & a_{1n} & 1 & 0&  ... & 0 \\
a_{21} & a_{22}-1 & ... & a_{2t} &... & a_{2n} & 0 & 1&  ... & 0 \\
\vdots &  &  & & & &&  &\\
a_{t1} &  a_{t2} & ... & a_{tt}-1 &... & a_{tn} & 0 &  0 &  ...  &1\\
\end{array} \right)$$  
The final $t+1$ columns of $C$ are clearly linearly independent, so that ${\rm columnrank}(C) \geq t+1$.   But ${\rm columnrank}(C) = {\rm columnrank}(B) = {\rm rowrank}(B) \leq t+1$ (since $B$ has $t+1$ rows), so that ${\rm columnrank}(C) =  {\rm rowrank}(B) = t+1$.

Since the $t+1$ rows of $B$ are linearly independent in $\mathbb{Q}^{n+t}$, there exists a solution to any system of equations of the form $B\vec{x} = \vec{c}$ for any $\vec{c}$ in $\mathbb{Q}^{t+1}$, in particular for $\vec{c} =  (1,0,0,...,0)^t$.
\end{proof}

\smallskip

We now have in place all the tools we need to establish our main result, that any Cohn path algebra $C_K(E)$ has IBN (Theorem \ref{MainResult}).   But prior to doing so, we make an observation which will provide some context.   By Theorem \ref{everyCohnisLeavitt}, together with the specific construction of the graph $F = F(E)$, we see that $C_K(E)$ is isomorphic to a Leavitt path algebra $L_K(F)$ for which the graph $F$ necessarily has sinks.   So one might be tempted to conjecture that our main result is simply an artifact of a result about Leavitt path algebras of graphs with sinks.  But indeed there are numerous examples of graphs $G$ with sinks for which the Leavitt path algebra $L_K(G)$ does not have IBN; for instance the graph $G = E(X)$ described  Example \ref{relativeCohnExample} below is such.  

Here now is our main result.  

\begin{theorem} \label{MainResult}
Let $E$ be any finite graph, and $K$ any field.  Then the Cohn path algebra $C_K(E)$ has the Invariant Basis Number property. 
\end{theorem}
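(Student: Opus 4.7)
The plan is to chain together the three tools already developed. By Theorem \ref{everyCohnisLeavitt}, it suffices to show that $L_K(F)$ has IBN for the graph $F = F(E)$, and by Corollary \ref{showingL(E)IBN} this reduces in turn to showing that $m\vec{\rho} \nsim_F m'\vec{\rho}$ in $M_F$ whenever $m \neq m'$, where $\vec{\rho} = (1,1,\dots,1) \in (\Z^+)^{n+t}$.

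To separate these classes, I would build a $\sim_F$-invariant map to $\mathbb{Q}$ from the weights produced by Proposition \ref{linearalgprop}. Explicitly, let $w_1,\dots,w_{n+t} \in \mathbb{Q}$ be rationals satisfying the $(t+1)$ equations displayed in Proposition \ref{linearalgprop}, and define a monoid homomorphism
\[
\phi \colon (\Z^+)^{n+t} \longrightarrow (\mathbb{Q},+), \qquad \phi(x_1,\dots,x_{n+t}) = \sum_{j=1}^{n+t} w_j x_j.
\]
The critical point is that for each regular vertex $v_i$ of $F$ (equivalently, each $i$ with $1\le i \le t$), the $i$-th relation of Proposition \ref{linearalgprop} states precisely
\[
\phi(\vec{b_i}) = w_i = \sum_{j=1}^n a_{i,j}w_j + \sum_{j=1}^t a_{i,j}w_{n+j} = \phi\bigl((a_{i,1},\dots,a_{i,n},a_{i,1},\dots,a_{i,t})\bigr),
\]
which is exactly the $F$-relation generating $\sim_F$ at the vertex $v_i$. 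Hence $\phi$ identifies the two sides of every generating relation of $\sim_F$, so it descends to a well-defined monoid homomorphism $\overline{\phi}\colon M_F \to \mathbb{Q}$.

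Finally, the first equation of Proposition \ref{linearalgprop} gives $\overline{\phi}([\vec{\rho}]) = w_1 + w_2 + \cdots + w_{n+t} = 1$, so $\overline{\phi}(m[\vec{\rho}]) = m$ and $\overline{\phi}(m'[\vec{\rho}]) = m'$. Since $m \neq m'$, we conclude $m\vec{\rho} \nsim_F m'\vec{\rho}$, finishing the proof via the chain above.

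The main work has already been absorbed into Proposition \ref{linearalgprop}; the remaining step is essentially bookkeeping. The only minor subtlety to keep in mind is that $w_1,\dots,w_{n+t}$ need not be non-negative, but this is irrelevant since $\phi$ is defined as a rational-valued functional on $(\Z^+)^{n+t}$ and we only need it to separate the values $m$ and $m'$ in $\mathbb{Q}$. No structural obstacle arises because the system in Proposition \ref{linearalgprop} was precisely engineered so that the normalization row sums to $1$ on $\vec{\rho}$ and the remaining rows enforce exactly the $\sim_F$-invariance of $\phi$.
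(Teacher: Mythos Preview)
Your proposal is correct and follows essentially the same approach as the paper: reduce via Theorem~\ref{everyCohnisLeavitt} and Corollary~\ref{showingL(E)IBN}, then use the rational weights from Proposition~\ref{linearalgprop} to build a $\sim_F$-invariant linear functional on $(\Z^+)^{n+t}$ sending $\vec{\rho}$ to $1$. The paper calls this map $\Gamma$ rather than $\phi$, but the construction and verification are identical.
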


{\bf Proof.} By Theorem \ref{everyCohnisLeavitt} we have $C_K(E) \cong L_K(F)$, where $F = F(E)$ is the graph described in Definition \ref{FgraphDef}.  Specifically, if $E^0 = \{v_1, v_2, ... , v_n\}$, and we label the regular vertices of $E$ as $v_1, ..., v_t$, then $F^0 = \{v_1, v_2, ..., v_n, v_1', v_2', ..., v_t'\}$, and the only regular vertices of $F$ are $\{v_1, v_2, ..., v_t\}$.  Note that  $|F^0| = n+t$.    

Let $A = A_E = (a_{i,j})$ be the incidence matrix of $E$.  Then  the  monoid $M_F$ is the monoid $(\Z^+)^{n+t}$,  modulo the equivalence relation generated by setting 
\begin{equation}
\vec{b_i} \sim_F (a_{i,1}, a_{i,2}, ..., a_{i,n}, a_{i,1}, a_{i,2}, ... , a_{i,t})  \mbox{  for each } 1\leq i \leq t.  \tag{$\dagger$}  	
\end{equation}
 
Let $\vec{\rho}$ denote the element 
$ (1,1,...,1)$ of $(\Z^+)^{n+t}$.   We establish, for any pair of positive integers $m \neq m'$, that $m\vec{\rho} \nsim_F m'\vec{\rho}$.

Let $w_1, w_2, ..., w_{n+t} \in \mathbb{Q}$ be rationals which satisfy the linear system presented in, and whose existence is guaranteed by,   Proposition \ref{linearalgprop}.  In particular, for $1\leq i \leq t$ we have $w_i = \sum_{\ell=1}^{n+t} a_{i,\ell} w_\ell$.      We define the map $\Gamma: (\Z^+)^{n+t} \rightarrow \mathbb{Q}$ by setting $$\Gamma ((z_1, z_2, ..., z_{n+t})) =  \sum_{\ell=1}^{n+t} z_\ell w_\ell.$$   Then $\Gamma$ is clearly linear.  We claim that if $(z_1, z_2, ..., z_{n+t}) \sim_F (z_1', z_2', ..., z_{n+t}')$, then \\
$\Gamma( (z_1, z_2, ..., z_{n+t}) )= \Gamma ((z_1', z_2', ..., z_{n+t}'))$.    The equivalence relation $\sim_F$ is generated by the relations given in 
($\dagger$), so it suffices to show that the claim holds if we apply any one of these generating relations to $(z_1, z_2, ..., z_{n+t})$ to produce $ (z_1', z_2', ..., z_{n+t}')$.   That is, it suffices to establish the claim in the situation where 
$(z_1, z_2, ..., z_{n+t}) = \vec{a} + \vec{b_i}$ and $ (z_1', z_2', ..., z_{n+t}') = \vec{a} + (a_{i,1}, a_{i,2}, ... a_{i,n}, a_{i,1}, a_{i,2}, ... a_{i, t})$,  for each $1\leq i \leq t$, and for any $\vec{a} \in (\Z^+)^{n+t}$.  Using the definition and linearity of $\Gamma$, we get
\begin{align}
\Gamma ((z_1, z_2, ..., z_{n+t})) &= \Gamma( \vec{a} + \vec{b_i}) = \Gamma(\vec{a}) + \Gamma(\vec{b_i}) = \Gamma(\vec{a}) + 1w_i = \Gamma(\vec{a}) + \sum_{\ell = 1}^{n+t} a_{i,\ell}w_{\ell} \notag \\
& =  \Gamma(\vec{a}) + \Gamma((a_{i,1}, a_{i,2}, ... a_{i,n}, a_{i,1}, a_{i,2}, ... a_{i, t})) 
\notag \\ &= \Gamma((z_1', z_2', ..., z_{n+t}') ). \notag
\end{align}
\noindent
Thus we have established the claim. 

But recall that, by Proposition \ref{linearalgprop},  the $w_{\ell}$ have been chosen so that \\ $\sum_{\ell = 1}^{n+t} w_{\ell} = 1$.  So in particular for any positive integer $m$  we get
$$\Gamma(m\vec{\rho}) = \Gamma((m,m,...,m)) = \sum_{\ell = 1}^{n+t} mw_{\ell} = m \sum_{\ell = 1}^{n+t} w_{\ell} = m \cdot 1 = m.$$    So for $m\neq m'$ we have $\Gamma(m\vec{\rho}) = m \neq m' =  \Gamma(m'\vec{\rho})$, so that  by the contrapositive of the claim we conclude that if $m\neq m'$, then  $m\vec{\rho} \nsim_F m'\vec{\rho}$. Now Corollary \ref{showingL(E)IBN} completes the proof. 
\hfill $\Box$

As one consequence of our main result, we state that any Cohn path algebra over a finite graph will also have Invariant Matrix Number. This result follows from the next proposition that we recall from \cite{AS}.  
\begin{proposition}(\cite[Proposition 4]{AS})\label{L(E)hasIMN} If $F$ is a finite graph, and $[L_K(F)]$ has infinite order in $K_0(L_K(F))$, then $L_K(F)$ has Invariant Matrix Number.
\end{proposition}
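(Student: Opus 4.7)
The plan is a $K_0$-theoretic argument using the hypothesis that $[L_K(F)]$ has infinite order. Set $R = L_K(F)$ and suppose for contradiction that $M_i(R) \cong M_j(R)$ as $K$-algebras for some positive integers $i \neq j$; the goal is to derive a relation of the form $N[R] = 0$ for some positive integer $N$ in $K_0(R)$, contradicting the infinite-order hypothesis.

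The tools are functoriality of $K_0$ (any ring isomorphism $\psi\colon S \to T$ induces a group isomorphism $\psi_*\colon K_0(S) \to K_0(T)$ carrying $[S]$ to $[T]$) and Morita invariance (the standard equivalence between $R$ and $M_n(R)$ gives a natural isomorphism $K_0(M_n(R)) \cong K_0(R)$ under which $[M_n(R)]$ corresponds to $n[R]$). Composing these with the hypothesized $\phi\colon M_i(R) \to M_j(R)$ produces an automorphism $\tau$ of $K_0(R)$ with $\tau(i[R]) = j[R]$. To upgrade this to the equality $i[R] = j[R]$, I would invoke Morita's theorem: the ring isomorphism $\phi$ arises from an invertible $R$-$R$-bimodule $P$ satisfying $P \otimes_R R^i \cong R^j$ as left $R$-modules, yielding $i[P] = j[R]$ in $K_0(R)$, and symmetrically $j[P^{-1}] = i[R]$. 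Combining these two relations with the bimodule identity $P \otimes_R P^{-1} \cong R$ should produce $(i-j)[R] = 0$ (up to a positive integer multiple), whence $i = j$ by the infinite-order hypothesis.

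The main obstacle is precisely this last elimination: the tensor product of bimodules does not interact with the additive structure of $K_0$ in an obvious way, so translating $P \otimes_R P^{-1} \cong R$ into an additive identity between $[P]$ and $[P^{-1}]$ in $K_0(R)$ requires careful bookkeeping (essentially a control over the Picard-group twist hidden in $\tau$). This packaging is the technical core of the cited reference \cite{AS}; it lets one sharpen the automorphism-level relation $\tau(i[R]) = j[R]$ into the honest equality $i[R] = j[R]$, which via the infinite-order hypothesis completes the proof and, together with Theorem \ref{MainResult}, secures the Invariant Matrix Number property for Cohn path algebras.
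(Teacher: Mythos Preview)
The paper does not prove this proposition at all; it merely recalls it from \cite{AS} and then applies it (together with Theorem~\ref{MainResult}) in the proof of Corollary~\ref{CortoMainResult}. So there is no argument in the present paper to compare your proposal against.

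As to the substance of your sketch: the reduction you carry out is correct up to the point you yourself flag. A ring isomorphism $\phi\colon M_i(R)\to M_j(R)$ does yield, after composing with the standard Morita equivalences, an auto-equivalence of $R\text{-Mod}$ implemented by an invertible bimodule $P$, and tracking the free module of rank one through that composite does give $P^i\cong R^j$ as left $R$-modules, hence $i[P]=j[R]$ in $K_0(R)$ (and symmetrically $j[P^{-1}]=i[R]$). The obstacle you name is genuine: the bimodule identity $P\otimes_R P^{-1}\cong R$ is multiplicative, not additive, and there is no general mechanism converting it into an additive relation between $[P]$ and $[P^{-1}]$ in $K_0(R)$. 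Without further input one cannot pass from $i[P]=j[R]$ to $i[R]=j[R]$; equivalently, the automorphism $\tau$ of $K_0(R)$ need not fix $[R]$, and an order-preserving automorphism can certainly send $i[R]$ to $j[R]$ with $i\neq j$ even when $[R]$ has infinite order.

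That missing input is exactly what \cite{AS} supplies, and it is not a general $K_0$/Picard argument but one specific to Leavitt path algebras (indeed the title of \cite{AS} concerns the purely infinite simple case, where strong classification-type results control such isomorphisms). So your proposal is an honest reduction that correctly isolates the hard step, but it is not a proof: the step you defer to \cite{AS} is the entire content of the proposition, and your bimodule outline does not close the gap on its own.
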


\begin{corollary} \label{CortoMainResult}
Let $E$ be any finite graph, and $K$ any field. Then the Cohn path algebra $C_K(E)$ has Invariant Matrix Number.
\end{corollary}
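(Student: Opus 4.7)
The plan is to invoke Proposition \ref{L(E)hasIMN} applied to the graph $F = F(E)$ produced by Definition \ref{FgraphDef}. By Theorem \ref{everyCohnisLeavitt} we have $C_K(E) \cong L_K(F)$, so it suffices to show that $[L_K(F)]$ has infinite order in $K_0(L_K(F))$. Once this is established, Proposition \ref{L(E)hasIMN} yields Invariant Matrix Number for $L_K(F)$, hence for $C_K(E)$.

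To verify the infinite-order condition, I would recycle the monoid homomorphism $\Gamma$ built during the proof of Theorem \ref{MainResult}. Recall that $\mathcal{V}(L_K(F)) \cong M_F$ under the map $[\vec{b_i}] \mapsto [L_K(F)v_i]$ of Theorem \ref{isoofmonoids}, and that $[L_K(F)]$ corresponds to $[\vec{\rho}]$ where $\vec{\rho} = (1,1,\dots,1) \in (\Z^+)^{n+t}$. The rationals $w_1,\dots,w_{n+t}$ from Proposition \ref{linearalgprop} were chosen precisely so that the linear map
\[
\Gamma\bigl((z_1,\dots,z_{n+t})\bigr) = \sum_{\ell=1}^{n+t} z_\ell w_\ell
\]
descends to a well-defined additive map $\bar\Gamma : M_F \to \mathbb{Q}$ (this was the content of the ``claim'' in that proof), and that $\bar\Gamma([\vec{\rho}]) = \sum_\ell w_\ell = 1$.

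Since $\bar\Gamma$ is a monoid homomorphism into the abelian group $(\mathbb{Q},+)$, it extends uniquely to a group homomorphism
\[
\widetilde{\Gamma} : K_0(L_K(F)) \longrightarrow \mathbb{Q}
\]
by the universal property of the Grothendieck group. Under this extension $\widetilde{\Gamma}([L_K(F)]) = 1$. If $[L_K(F)]$ had finite order $m$ in $K_0(L_K(F))$, then $\widetilde{\Gamma}(m[L_K(F)]) = m \cdot 1 = m$ would equal $0$ in $\mathbb{Q}$, which is absurd for $m \geq 1$. Hence $[L_K(F)]$ has infinite order, Proposition \ref{L(E)hasIMN} applies, and $C_K(E) \cong L_K(F)$ has the Invariant Matrix Number property.

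The only real work is the verification that $\bar\Gamma$ descends to the Grothendieck group, but this is automatic from its monoid-homomorphism status; no obstacle beyond a careful citation of the constructions already in hand is expected. The main conceptual step is recognizing that $\Gamma$ not only separates the multiples of $\vec{\rho}$ in $M_F$ (which is all that was needed for IBN) but also witnesses that $[\vec{\rho}]$ has infinite order in the group completion, which is the stronger input required by Proposition \ref{L(E)hasIMN}.
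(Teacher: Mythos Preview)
Your argument is correct, but the paper reaches the same conclusion by a slightly different and more general route. Rather than reusing the explicit map $\Gamma$ to witness infinite order of $[L_K(F)]$ in $K_0(L_K(F))$, the paper observes the purely ring-theoretic implication: for \emph{any} unital ring $R$ with IBN, the class $[R]$ must have infinite order in $K_0(R)$. Indeed, if $n[R]=0$ in $K_0(R)$ for some $n\ge 1$, then $R^n$ is stably isomorphic to $0$, so $R^{n+m}\cong R^m$ for some $m$, contradicting IBN. Thus the paper invokes only the \emph{statement} of Theorem~\ref{MainResult} (that $C_K(E)\cong L_K(F)$ has IBN) together with Proposition~\ref{L(E)hasIMN}. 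Your approach instead dips back into the \emph{proof} of Theorem~\ref{MainResult} and extends $\bar\Gamma$ through the Grothendieck construction. Both are valid; the paper's argument is cleaner in that it isolates a reusable lemma (IBN $\Rightarrow$ $[R]$ has infinite order in $K_0(R)$) independent of the particular graph-theoretic machinery, while your argument has the virtue of making the infinite-order witness completely explicit.
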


{\bf Proof.} Using the same terminology and setup as in the proof of the main theorem, we will show that $C_K(E) \cong L_K(F)$ has Invariant Matrix Number.  

Let $R$ be any unital ring, and suppose that $R$ has finite order in $K_0(R)$.  Then $n[R] = [0]$ in 
$K_0(R)$ for some $n\in \mathbb{N}$.  By the construction of $K_0$, this means that $R^n$ is stably isomorphic to $\{0\}$; i.e.,  that there exists $m\in \mathbb{N}$ with $R^n \oplus R^m \cong \{0\} \oplus R^m$ as left $R$-modules.  But then $R^{n+m} \cong R^m$ as left $R$-modules.   The upshot of this discussion is that for any unital ring $R$ having Invariant Basis Number, then $[R]$ has infinite order in $K_0(R)$.

Thus Theorem 2.2 together with Proposition 2.2 apply to yield the result. 
\hfill $\Box$

\smallskip

The Cohn path algebra $C_K(E)$ and the Leavitt path algebra $L_K(E)$ of a graph $E$ may be thought of as occupying two ends of a spectrum:  to build $L_K(E)$ we impose the (CK2) relation at {\it all} of the vertices of $E$, whereas to build $C_K(E)$ we impose the (CK2) relation at {\it none} of the vertices.  There is intermediate ground: for any set $X \subseteq {\rm Reg}(E)$ we define  the {\it relative} Cohn path algebra $C_K^X(E)$ to be the $K$-algebra generated by the same relations as the relations which generate $C_K(E)$, where in addition we impose  the (CK2) relation precisely at the vertices in $X$. 
(See \cite[Section 5]{AAS} for additional information.) So $C_K(E) = C_K^\emptyset (E)$, while $L_K(E) = C_K^{{\rm Reg}(E)}(E)$.  Cast in the context of Theorem \ref{MainResult}, the case $X = \emptyset$ (i.e, $|X| = 0$) plays a unique role, in this sense:  for any positive integers $1 \leq m \leq n$ there exists a  graph $E$ and $X \subseteq E^0$ having $|E^0| = n$, $|X|=m$, and for which  $C^X_K(E)$ does not have IBN.    We describe here such a graph in the case $m=1, n=2$; the construction easily generalizes to arbitrary $m,n$.   

\begin{example}\label{relativeCohnExample}    
{\rm 
Let $E$ be the graph  
$  \xymatrix{\bullet^v \ar@(u,l)^{} &  \bullet^w 
\ar[l]^{} \ar@(u,r)^{} \ar@(r,d)^{}   }$, and let $X = \{w \}$.  By \cite[Theorem 1.5.17]{AAS}, $C^X_K(E) \cong L_K(E(X))$, 
where the graph $E(X)$ is 
$  \xymatrix{\bullet^v \ar[d]\ar@(u,l)^{} &  \bullet^w 
\ar[l]^{} \ar@(u,r)^{} \ar@(r,d)^{} 
\ar[dl]_{}\\
\bullet^{v'} & 
  }$.
Here  $M_{E(X)}$ is $(\Z^+)^3$ modulo the equivalence relation $\sim_{M(E)}$ generated by setting 
 $(1,0,0) \sim_{E(X)} (1,0,1)$ and $(0,1,0) \sim_{E(X)} (1,2,1)$.  But then $ \vec{\rho} = (1,1,1) = (1,0,1) + (0,1,0) \sim_{E(X)}  (1,0,1) + (1,2,1) = (2,2,2) = 2\vec{\rho}$, so that 
$L_K(E(X))$, and thus also $C_K^X(E)$, does not have the IBN property by 
Corollary~\ref{showingL(E)IBN}.
For the general case,  
Let $E_n$ be the graph 
$$  \xymatrix{ & &  \bullet_{v_n} \ar[dll]_{} \ar[dl]_{} \ar[dr]_{}\ar@(u,l)^{} \ar@(u,r)^{}  & \\
\bullet^{v_1} \ar@(d,l)^{} & \bullet^{v_2} \ar@(d,l)^{} &\cdots & \bullet^{v_{n-1}} 
\ar@(dl,dr)^{}}$$ 
and $X_m = \{v_{n-m+1},...,v_n \}$.   Then using the same ideas as for the graph $E$ above, it is easy to establish that $C_K^{X_m}(E_n)$ does not satisfy the IBN property.   
\hfill $\Box$}
\end{example}

We want to point out that, in $L_K(F)$ with IBN property, for $m\neq m'$, although $ m\vec{\rho} \not\sim_F m'\vec{\rho}$, there can exist elements  $x$  in $M_F$ for which $mx \sim_F m'x$. 
Take for instance, the graph $F=F(R_2)$ of Example~\ref{F(R_2)Example} and 
consider the element  $x = (1,2)$. Then $x= (1,2) \sim_F (2,2)+(0,2) = (2,4)= 2x $.

We conclude this short note by providing an alternate proof of the main result.   Unlike the approach used above (an `invariant-based' method), this alternate approach makes use of an explicit description of $\mathcal{V}(C_K(E))$, together with a result about monoids generated by relations of a specified type. This alternate approach was suggested by P. Ara, to whom the authors are extremely grateful.

\medskip

{\bf Theorem \ref{MainResult}, re-established}.    We define the monoid $M_{C(E)}$  as follows.  Let $T$ denote the free abelian monoid (written additively) with generators   $E^0 \ \sqcup \  \{q_v \  | \ v \ \mbox{is regular}\}$.   Define relations in $T$  by setting 
\begin{equation}
v =  q_v + \sum_{e\in s^{-1}(v)}r(e)
\tag{$\ddagger$}
\end{equation}
 for each $v \in {\rm Reg}(E)$.   Let $\sim$ be the equivalence relation in $T$ generated by these relations.  Then $M_{C(E)} = T / \sim$, with operation defined by by setting $[x] + [y] = [x+y]$.

By \cite[Theorem 4.3]{AG}, $\mathcal{V}(C_K(E)) \cong M_{C(E)}$; moreover, under this isomorphism, $[C_K(E)] \mapsto [\sum_{v \in E^0}v]$.   We denote $E^0$ by $\{v_1, v_2, ..., v_n\}$
.   Thus we achieve the desired conclusion by showing that, for positive integers $m,m'$, if  $m[\sum_{i=1}^n v_i ] = m'[\sum_{i=1}^n v_i ]$ in $M_{C(E)}$, then $m=m'$.     

 For each $t\in T$ Let  $\ddagger_i(t)$ denote the element of $T$ which results by applying the relation ($\ddagger$) corresponding to vertex $v_i$ on $t$.   For any sequence $\sigma $ (of any length) taken from $\{1,2,...,n\}$, and any $t\in T$,  let $\Lambda_{\sigma}(t) \in T$ be the element which results by applying $\ddagger_i$ operations in the order specified by $\sigma$.   
 
 The monoid $M_{C(E)}$ is given by generators and relations of the form described in \cite[Section 4]{AMP} (where the monoid is denoted by $F$).  In particular, by \cite[Lemma 4.3]{AMP} (the so-called `Confluence Lemma'), the hypothesis $m[\sum_{i=1}^n v_i ] = m'[\sum_{i=1}^n v_i ]$ in $M_{C(E)}$ yields that there are  two sequences $\sigma, \sigma'$ for which  
 $$\Lambda_{\sigma}(m\sum_{i=1}^n v_i ) = \gamma = \Lambda_{\sigma'}(m'\sum_{i=1}^n v_i )$$ 
 for some $\gamma \in T$. 
But each time a substitution of the form $\ddagger_i$  is made to an element of $T$, the effect on that element  is to:   
\begin{enumerate}
\item subtract $1$ from the coefficient on $v_i$;  
\item add $a_{ij}$ to the coefficient on $v_j$ (for $1\leq j \leq n$); and 
\item add $1$ to the coefficient on $q_{v_i}$. 
\end{enumerate}

As one consequence of this observation we see that, starting with $t \in T$ for which the coefficient on each $q_{v_i}$ in $t$ is $0$, the coefficient on $q_{v_i}$ in $\Lambda_{\sigma}(t)$ is precisely the number of times the relation $\ddagger_i$ was invoked.   In particular, since both $m\sum_{i=1}^n v_i $ and $m' \sum_{i=1}^n v_i $ have this property,  the equation $\Lambda_{\sigma}(m\sum_{i=1}^n v_i )  = \Lambda_{\sigma'}(m'\sum_{i=1}^n v_i )$ in $T$ yields that for each regular vertex $v_i$, the number of times $\ddagger_i$ is invoked in $\Lambda_{\sigma}$ is the same as the number of times that $\ddagger_i$ is invoked in $\Lambda_{\sigma'}$.    Denote this common number by $k_i$.    Recalling the previously observed effect of $\ddagger_i$ on any element of $T$, we see that 
\begin{align}
\gamma = &\Lambda_{\sigma}(m\sum_{i=1}^n v_i ) \notag \\
 = &((m - k_1)+ k_1a_{11}+k_2a_{21}+...+k_na_{n1})v_1 \notag \\
 &+((m-k_2)+k_1a_{12}+k_2a_{22}+...+k_na_{n2})v_2 + \cdots 
\notag \\
&  +
((m - k_n) +k_1a_{1n}+k_2a_{2n}+...+k_na_{nn})v_n + k_1 q_{v_1} + k_2 q_{v_2} + \cdots + k_n q_{v_n} .\notag 
\end{align}
But then also
\begin{align}
\gamma = &\Lambda_{\sigma'}(m'\sum_{i=1}^n v_i ) \notag \\
= &((m' - k_1) + k_1a_{11}+k_2a_{21}+...+k_na_{n1})v_1 \notag \\
&+((m'-k_2)+k_1a_{12}+k_2a_{22}+...+k_na_{n2})v_2 + \cdots
\notag \\
& +
((m' - k_n) +k_1a_{1n}+k_2a_{2n}+...+k_na_{nn})v_n +  k_1 q_{v_1} + k_2 q_{v_2} + \cdots + k_n q_{v_n} .
\notag 
\end{align}

By equating coefficients on any of the free generators  $v_i$ of $T$,  we conclude that $m-k_i = m' - k_i$, so that $m = m'$ as desired.

\bigskip

\end{document}